\numberwithin{equation}{section} \numberwithin{equation}{section}
\newcommand{\RNum}[1]{\uppercase\expandafter{\romannumeral #1\relax}}
\newtheorem{definition}{Definition}
\newtheorem{ex}{Example}
\newtheorem{theorem}{Theorem}
\newtheorem{remark}{Remark}
\numberwithin{equation}{section}
\begin{document}
	\title{\bf  On the Maximal Solution of A Linear System over Tropical Semirings}
	\author{{Sedighe Jamshidvand\textsuperscript{$a$}, Shaban Ghalandarzadeh\textsuperscript{$a$}, Amirhossein Amiraslani\textsuperscript{$b,a$}, Fateme Olia\textsuperscript{$a$}}\\
		{\em \small   $^a$ Faculty of Mathematics, K. N. Toosi University of
			Technology, Tehran, Iran} \\
		{\em \small   $^b$ STEM Department, The University of Hawaii-Maui College, Kahului, Hawaii, USA }}
	\maketitle{}		
	
\begin{abstract}
	In this paper, we present methods for solving a system of linear equations, $ AX=b $, over tropical semirings. To this end, if possible, we first reduce the order of the system through some row-column analysis, and obtain a new system with fewer equations and variables. We then use the pseudo-inverse of the system matrix to solve the system if solutions exist. Moreover, we propose a new version of Cramer's rule to determine the maximal solution of the system. Maple procedures for computing the pseudo-inverse are included as well.
\end{abstract}

	{\small {\it key words}: Tropical semiring; System of linear equations;  Cramer's rule; Pseudo-inverse}\\[0.3cm]
	{\bf AMS Subject Classification:} 16Y60, 65F05, 15A06, 15A09.
	\section{Introduction}
	Solving systems of linear equations is an important aspect of linear algebra. Considering solution techniques for systems of linear equations over rings and, as a special case, over fields, we intend to develop systematic methods to understand the behavior of linear systems and extend some well-known results over rings to tropical semirings. Systems of linear equations over tropical semirings find applications in various areas of engineering, computer science, optimization theory, control theory, etc~(see \cite{A1}, \cite{G1}, \cite{K1}, \cite{K2}, \cite{M1} ).
	
	Semirings are algebraic structures similar to rings, but subtraction and division can not necessarily be defined for them. The notion of a semiring was first introduced by Vandiver \cite{V1} in 1934. A semiring $(S,+,.,0,1)$ is an algebraic structure in which $(S, +)$ is a commutative monoid with an identity element $0$ and $(S,.)$ is a monoid with an identity element 1, connected by ring-like distributivity. The additive identity $0$ is multiplicatively absorbing, and $0 \neq 1$. Note that for convenience, we mainly consider $S=(\mathbb{R} \cup \{-\infty\}, max, +, -\infty, 0)$, which is a well-known tropical semiring called ``$\max-\rm plus$ algebra", in this work.  Other examples of tropical semirings, which are isomorphic to ``$\max-\rm plus$ algebra", are ``$\max-\rm times$ algebra", ``$\min-\rm times$ algebra "and ``$\min-\rm plus$ algebra".
	
	Suppose $ S$ is a tropical semiring. We want to solve the system of linear equations $ AX=b $, where $ A \in M_{n}(S),~ b \in S^{n} $ and $ X $ is an unknown vector. Sometimes, we can reduce the order of the system to obtain a new system with fewer equations, which is called a reduced system.
	Furthermore, we find necessary and sufficient conditions based on the ``pseudo-inverse'', $A^{-}$, of matrix $ A $ with determinant $ det_{\varepsilon}(A) \in U(S) $ to solve the system, where $ U(S) $ is the set of the unit elements of $ S $. This method is not limited to square matrices, and can be extended to arbitrary matrices of size $ m \times n $ as well. To this end, we must consider a square system of size $ \min\{ m,n \} $ corresponding to the non-square system.
	
	Cramer's rule is one of the most useful methods for solving a linear system of equations over rings and fields. As an extension of this method, we will establish a new version of Cramer's rule to obtain the ``maximal'' solution of a system of linear equations over tropical semirings. In~\cite{C1}, Cramer's rule over max-plus algebra has been fairly exhaustively studied. In this work, we propose an approach that is more aligned with Cramer's rule in classic linear algebra. We derive clear cut conditions based on the entries of $AA^{-}$ on when the method is applicable and yields a solution. Additionally, the solution that is found here is guaranteed to be the maximal solution of the system.    
	
	Through row-column analysis, we can remove linearly dependent rows and columns of the system matrix. As such, the reduced system is obtained with fewer equations and unknowns and it is proved that the existence of solutions of the primary system and the corresponding reduced system depend on each other; see Section~3 for more details.
	
	In Section~4, assuming that $ det_{\varepsilon}(A) \in U(S) $, we focus on methods for solving the square system $ AX=b $. Finding necessary and sufficient conditions on the entries of matrix $ AA^{-} $, we obtain the maximal solution $ X^{*} $ of the system $ AX=b $ using two methods: by computing the pseudo-inverse of $ A $, as $ A^{-}b $, and without computing the pseudo-inverse of $ A $ through Cramer's rule. Additionally, for solving the non-square system $ AX=b $, we construct a corresponding square system. If there are fewer equations than unknowns, then the solvability of the corresponding square system implies that the system $ AX=b $ should be solvable, too. If, however, there are more equations than unknowns, we can establish a square system simply by multiplying the transpose of the matrix $ A $ from left by the non-square system $ AX=b $. Note that in this case, the solution of the square system can not necessarily be a solution of the system $ AX=b $, unless it satisfies some conditions with regards to the eigenvalues and eigenvectors of the associated square matrix.
	
	In the appendix of this paper, we give some Maple procedures as follows. Table $1$ is a subroutine for finding the determinant of a square matrix. Table~$2$ gives a code for calculating matrix multiplications. Table $3$ consists of a program for finding the pseudo-inverse of a square matrix and computing the multiplication of the matrix by its pseudo-inverse. These procedures are all written for max-plus algebra.
	
	\section{Definitions and preliminaries}
	In this section, we give some definitions and preliminaries. For convenience, we use $  \mathbb{N} $ and $\underline{n}$ to denote the set of all positive integers and the set $\{ 1,2,\cdots,n\}$ for $n \in \mathbb{N}$, respectively.
	\begin{definition}
		A semiring $ (S,+,., 0,1)$ is an algebraic system consisting of a nonempty set $ S $ with two binary operations,  addition and multiplication, such that the following conditions hold:
		\begin{enumerate} 
			\item{$ (S, +) $ is a commutative monoid with identity element $ 0 $};
			\item{ $ (S, \cdot) $ is a monoid with identity element $ 1 $};
			\item{ Multiplication distributes over addition from either side, that is $ a(b+c)= ab+ac $ and $ (b+c)a=ba+ca $ for all $ a, b \in S $};
			\item{ The neutral element of $ S $ is an absorbing element, that is $ a\cdot 0 =0= 0 \cdot a  $ for all $ a \in S $};
			\(\)			\item{ $ 1 \neq 0 $}.
		\end{enumerate}
		A semiring is called commutative if $ a\cdot b = b \cdot a $ for all $ a, b \in S $.
	\end{definition}
	\begin{definition}
		A commutative semiring $(S,+,.,0,1)$ is called a semifield if every nonzero element of $S$ is multiplicatively invetrible.
	\end{definition}
	In this work our main focus is on tropical semiring $(\mathbb{R}\cup \{ -\infty \}, \max, +, -\infty, 0 )$, denoted by $\mathbb{R}_{\max, +}$, that is called ``$\max-\rm plus$ algebra" whose additive and multiplicative identities are $-\infty$ and $0$, respectively. Moreover, the notation $a-b$ in ``$\max-\rm plus$ algebra" is equivalent to $ a + (-b)  $, where $ ``-" ,~``+" $ and $ -b $ denote the usual real numbers subtraction, addition and the typical additively inverse of the element $ b $, respectively. Note further that ``$\max-\rm plus$ algebra" is a commutative semifield.
	\begin{definition}(See \cite{G1})
		Let $ S $ be a semiring. A left $ S $-semimodule is a commutative monoid $ (\mathcal{M},+) $ with identity element $ 0_{\mathcal{M}}$ for which we have a scalar multiplication function $S \times \mathcal{M} \longrightarrow \mathcal{M} $, denoted by $ (s, m)\mapsto sm $, which satisfies the following conditions for all $ s, s^{\prime} \in S $ and $ m, m^{\prime} \in \mathcal{M} $:
		\begin{enumerate}
			\item{$ (ss^{\prime})m=s(s^{\prime}m) $ };
			\item{$ s(m+m^{\prime})= sm +sm^{\prime}$};
			\item{$ (s+s^{\prime})m=sm +s^{\prime}m $};
			\item{$ 1_{S}m=m$};
			\item{$ s0_{\mathcal{M}}=0_{\mathcal{M}}=0_{S}m$.}
		\end{enumerate}  
	\end{definition}
	Right semimodules over $ S $ are defined in an analogous manner.
	\begin{definition}
		A nonempty subset $ \mathcal{N} $ of a left $ S$-semimodule $ \mathcal{M} $ is a subsemimodule of $ \mathcal{M} $ if $ \mathcal{N} $ is closed under addition and scalar multiplication. Note that this implies $ 0_{\mathcal{M}} \in \mathcal{N} $. Subsemimodules of right semimodules are defined analogously. 
	\end{definition}
	\begin{definition}
		Let $ \mathcal{M} $ be a left $ S $-semimodule and $ \{ \mathcal{N}_{i} \vert i \in \Omega \} $ be a family of subsemimodules of $ \mathcal{M} $. Then $ \displaystyle{\bigcap_{i \in \Omega}}\mathcal{N}_{i}$ is a subsemimodule of $ \mathcal{M} $ which, indeed, is the largest subsemimodule of $ \mathcal{M} $ contained in each of the $ \mathcal{N}_{i}$. In particular, if $ \mathcal{A} $ is a subset of a left $ S $-semimodule $ \mathcal{M} $, then the intersection of all subsemimodules of $ \mathcal{M} $ containing $ \mathcal{A} $ is a subsemimodule of $ \mathcal{M} $, called the subsemimodule generated by $ \mathcal{A} $. This subsemimodule is denoted by
		$$ S\mathcal{A}=Span(\mathcal{A}) = \{\displaystyle{\sum_{i=1}^{n}}s_{i}\alpha_{i}~ \vert~ s_{i} \in S, \alpha_{i} \in \mathcal{A},i \in \underline{n}, n \in \mathbb{N}  \}.$$
		If $  \mathcal{A}$ generates all of the semimodule $ \mathcal{M} $ , then $\mathcal{A} $ is a set of generators for $ \mathcal{M} $. Any set of generators for $ \mathcal{M} $ contains a minimal set of generators. A left $ S $-semimodule having a finite set of generators is finitely generated. Note that the expression $ \displaystyle{\sum_{i=1}^{n}}s_{i}\alpha_{i} $ is a linear combination of the elements of $ \mathcal{A} $.
	\end{definition}
	\begin{definition}(See \cite{T2})
		Let $\mathcal{M}$ be a left $ S $-semimodule. A nonempty subset, $ \mathcal{A}$, of $\mathcal{M} $ is called linearly independent if $ \alpha \notin Span(\mathcal{A} \setminus \{ \alpha \}) $ for any $ \alpha \in \mathcal{A} $. If $\mathcal{A}$ is not linearly independent, then it is called linearly dependent.
	\end{definition}
	\begin{definition}
		The rank of a left $ S $-semimodule $ \mathcal{M} $ is the smallest $ n $ for which there exists a set of generators  of $ \mathcal{M} $ with cardinality $ n $. It is clear that  $ rank(\mathcal{M})$ exists for any finitely generated left $ S $-semimodule $ \mathcal{M} $. \\
		This rank need not be the same as the cardinality of a minimal set of generators for $ \mathcal{M} $, as the following example shows. 
	\end{definition}
	\begin{ex}
		Let $S $ be a semiring and $\mathcal{R} =S \times S $ be the Cartesian product of two copies of $ S $. Then $ \{ (1_{S},1_{S}) \}$ and $ \{ (1_{S}, 0_{S}), (0_{S},1_{S}) \}$ are both minimal sets of generators for $ \mathcal{R} $, considered as a left semimodule over itself with componentwise addition and multiplication. Hence, $ rank(\mathcal{R})= 1 $.
	\end{ex}
	Let $S$ be a commutative semiring. We denote the set of all $m \times n$ matrices over $ S $ by $M_{m \times n}(S)$. For $A \in M_{m \times n} (S)$, we denote by $a_{ij}$ and $A^{T}$ the $(i,j)$-entry of $A$ and the transpose of $A$, respectively.\\
	For any $A, B \in M_{m \times n}(S)$, $C \in M_{n \times l}(S)$ and $\lambda \in S$, we define: $$A+B = (a_{ij}+b_{ij})_{m \times n},$$ $$AC=(\sum_{k=1}^{n} a_{ik}b_{kj})_{m \times l},$$ and $$\lambda A=(\lambda a_{ij})_{m\times n}.$$
	Clearly, $ M_{m \times n}(S) $ equipped with matrix addition and matrix scalar multiplication is a left $ S $-semimodule.
	It is easy to verify that $M_{n}(S):=M_{n \times n}(S)$ forms a semiring with respect to the matrix addition and the matrix multiplication.\\
	The above matrix operations over $ \max-\rm plus$ algebra can be considered as follows.
	$$A+B = (\max(a_{ij},b_{ij}))_{m \times n},$$ 
	$$AC=(\max_{k=1}^{n} (a_{ik}+b_{kj}))_{m \times l},$$ 
	and $$\lambda A=(\lambda + a_{ij})_{m\times n}.$$
	For convenience, we can denote the scalar multiplication $ \lambda A $ by $ \lambda + A $. Moreover, $ \max-\rm plus$ algebra is a commutative semiring which implies $ \lambda + A= A +\lambda $.
	\begin{definition}
		Let $A, B \in M_{n}(S)$ such that $A=(a_{ij})$ and $B=(b_{ij})$. We say $A\leq B$ if and only if $a_{ij} \leq b_{ij}$ for every $i\in \underline{m}$ and $j \in \underline{n}$. 
	\end{definition}
	Let $A \in M_{n}(S)$, $\mathcal{S}_{n}$ be the symmetric group of degree $n \geq 2$ , and $\mathcal{A}_{n}$ be the alternating group on $n$ such that
	\begin{center}
		$\mathcal{A}_{n}= \{ \sigma \vert \sigma \in \mathcal{S}_{n} ~\text{and}~\sigma \text{~is~an~even~permutation}\}$. 
	\end{center} 
	The positive determinant, $\vert A \vert ^{+}$, and negative determinant, $\vert A \vert ^{-}$, of $A$ are 
	\begin{center}
		$$\displaystyle{\vert A \vert ^{+}=\sum_{\sigma \in \mathcal{A}_{n}} \prod_{i=1}^{n} a_{i\sigma (i)}},$$
	\end{center}
	and
	\begin{center}
		$$\displaystyle{\vert A \vert ^{-}= \sum_{\sigma \in \mathcal{S}_{n} \backslash \mathcal{A}_{n}} \prod_{i=1}^{n} a_{i\sigma (i)}}. $$
	\end{center}
	Clearly, if $S$ is a commutative ring, then $\vert A \vert=\vert A \vert ^{+} - \vert A \vert ^{-}$.
	\begin{definition}
		Let $S$ be a semiring. A bijection $\varepsilon$ on $S$ is called an $\varepsilon$-function of $S$ if $\varepsilon(\varepsilon(a))=a$, $\varepsilon(a+b)= \varepsilon(a)+\varepsilon(b)$, and $\varepsilon(ab)=\varepsilon(a)b=a\varepsilon(b)$ for all $a, b \in S$. Consequently, $\varepsilon(a)\varepsilon(b)=ab$ and $\varepsilon(0)=0$.\\
		The identity mapping: $a \mapsto a$ is an $\varepsilon$-function of $S$ that is called the identity $\varepsilon$-function.
	\end{definition}
	\begin{remark}
		Any semiring $S$ has at least one $\varepsilon$-function since the identical mapping of $S$ is an $\varepsilon$-function of $S$. If $S$ is a ring, then the mapping : $a \mapsto -a$, $( a \in S)$ is an $\varepsilon$-function of $S$.
	\end{remark}
	\begin{definition}
		Let $S$ be a commutative semiring with an $\varepsilon$-function, $\varepsilon$, and $A \in M_{n}(S)$. The $\varepsilon$-determinant of $A$, denoted by $det_{\varepsilon}(A)$, is defined by 
		\begin{center}
			$$det_{\varepsilon}(A)= \displaystyle{\sum_{\sigma \in \mathcal{S}_{n}}\varepsilon ^{\tau(\sigma)}(a_{1\sigma(1)}a_{2\sigma(2)}\cdots a_{n\sigma(n)})}$$
		\end{center}
		where $\tau(\sigma)$ is the number of the inversions of the permutation $\sigma$, and $\varepsilon^{(k)}$ is defined by $\varepsilon^{(0)}(a)=a$ and $\displaystyle{\varepsilon^{(k)}(a)=\varepsilon^{k-1}(\varepsilon(a))}$ for all positive integers $k$.
		Since $\varepsilon^{(2)}(a)=a$, $det_{\epsilon}(A)$ can be rewritten in the form of
		\begin{center}
			$det_{\epsilon}(A)=\vert A \vert ^{+}+\varepsilon(\vert A \vert^{-})$.
		\end{center}
		In particular, for $S=\mathbb{R}_{\max, +}$ with the identity $\varepsilon$-function, we  have $det_{\varepsilon}(A)=\max(\vert A \vert ^{+}, \vert A \vert ^{-})$.
	\end{definition}
	\begin{definition}
		Let $S$ be a commutative semiring with $\varepsilon$-function, $\varepsilon$, and $A \in M_{n}(S)$. The $\varepsilon$-adjoint matrix $A$, written as $adj_{\varepsilon}(A)$, is defined as follows.\\
		\begin{center}
			$adj_{\varepsilon}(A)=((\varepsilon^{(i+j)}det_{\varepsilon}(A(i | j)))_{n \times n})^{T}$,
		\end{center}
		where $ A(i|j) $ denotes the $ (n-1) \times (n-1) $ submatrix of $ A $ obtained from $ A $ by removing the $ i $-th row and the $ j $-th column. It is clear that if $S$ is a commutative ring, $\varepsilon$ is the mapping: $a \mapsto -a$;$(a \in S)$, and $A \in M_{n}(S)$, then $adj_{\varepsilon}(A)=adj(A)$.
	\end{definition}
	\begin{theorem} (See \cite{T1})\label{r-c} Let $A \in M_{n}(S)$. We have 
		\begin{enumerate}
			\item $Aadj_{\varepsilon}(A)=(det_{\varepsilon}(A_{r}(i \Rightarrow j)))_{n \times n}$,
			\item $adj_{\varepsilon}(A)A=(det_{\varepsilon}(A_{c}(i \Rightarrow j)))_{n \times n}$,
		\end{enumerate}
		where $A_{r}(i \Rightarrow j)$ ($A_{c}(i \Rightarrow j)$) denotes the matrix obtained from $A$  by replacing the $j$-th row (column) of $A$ by the $i$-th row (column) of $A$.
	\end{theorem}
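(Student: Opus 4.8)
The identity is, in essence, the Laplace (cofactor) expansion of the $\varepsilon$-determinant applied to suitably modified matrices, so my plan is to isolate that expansion as a lemma and then read off both formulas. I would first prove the cofactor expansion along an arbitrary row: for every $B=(b_{pq})\in M_n(S)$ and every fixed row index $j$,
\[
det_\varepsilon(B)=\sum_{k=1}^{n} b_{jk}\,\varepsilon^{(j+k)}\!\bigl(det_\varepsilon(B(j|k))\bigr),
\]
together with its mirror image along an arbitrary column.

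To establish this, I would partition the symmetric group $\mathcal{S}_n$ according to the value $k=\sigma(j)$. The permutations with $\sigma(j)=k$ are in bijection with $\mathcal{S}_{n-1}$, realised as bijections $\underline n\setminus\{j\}\to\underline n\setminus\{k\}$, and for the induced permutation $\pi$ one checks that the inversion count obeys $\tau(\sigma)\equiv (j+k)+\tau(\pi)\pmod 2$. Since $\varepsilon^{(2)}=\mathrm{id}$, only this parity is relevant, and the axioms $\varepsilon(ab)=\varepsilon(a)b=a\varepsilon(b)$ together with additivity of $\varepsilon$ let me pull $b_{jk}$ and $\varepsilon^{(j+k)}$ out of the $k$-th block, whose remaining contribution is exactly $det_\varepsilon(B(j|k))$. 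This is the technical heart of the argument.

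With the lemma in hand, both parts are routine bookkeeping. Unwinding the definition of $adj_\varepsilon$ gives $(adj_\varepsilon(A))_{kj}=\varepsilon^{(j+k)}det_\varepsilon(A(j|k))$, so the $(i,j)$-entry of $A\,adj_\varepsilon(A)$ equals $\sum_{k} a_{ik}\,\varepsilon^{(j+k)}det_\varepsilon(A(j|k))$. Taking $B=A_r(i\Rightarrow j)$, the matrix $A$ with its $j$-th row overwritten by the $i$-th row, we have $b_{jk}=a_{ik}$ for all $k$, while deleting row $j$ erases the overwrite, so $B(j|k)=A(j|k)$; hence the sum above is exactly the row-$j$ expansion of $det_\varepsilon(B)$, which is part (1). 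Part (2) is the transpose-side computation: $(adj_\varepsilon(A))_{ik}=\varepsilon^{(i+k)}det_\varepsilon(A(k|i))$, so the $(i,j)$-entry of $adj_\varepsilon(A)\,A$ equals $\sum_k \varepsilon^{(i+k)}det_\varepsilon(A(k|i))\,a_{kj}$, which by the column form of the lemma is the Laplace expansion, along the relevant column, of $det_\varepsilon$ of the matrix obtained from $A$ by the stated column replacement; specialising $i=j$ recovers $det_\varepsilon(A)$ along the diagonal, as a sanity check.

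The step I expect to be the genuine obstacle is the parity bookkeeping inside the cofactor expansion. Over a semiring one has only the involution $\varepsilon$ in place of honest signs and no subtraction, so the usual ring argument (expand, then cancel a determinant with two equal lines) is unavailable; instead one must track carefully how transposing row $j$ and column $k$ into corner position contributes a factor depending only on $j+k \bmod 2$, and verify that it commutes with $\varepsilon$ and with scalar multiplication. A lesser but real pitfall is keeping the source/target convention of the row (column) replacement aligned with the cofactor indices $A(j|k)$ (respectively $A(k|i)$), so that the line being replaced is precisely the line being deleted when the minor is formed.
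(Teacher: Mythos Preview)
The paper does not give its own proof of this theorem: it is quoted with the attribution ``(See \cite{T1})'' and used thereafter as a black box. There is therefore nothing in the present paper to compare your argument against; the intended reference is Tan's Laplace expansion for $det_\varepsilon$ (Theorem~3.3 in \cite{T1}), which the authors themselves invoke later when proving Theorem~\ref{ECramer}.

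That said, your plan is exactly the standard route and is sound. The cofactor-expansion lemma you isolate is the right engine, the parity argument via $\tau(\sigma)\equiv(j+k)+\tau(\pi)\pmod 2$ together with $\varepsilon^{(2)}=\mathrm{id}$ and the multiplicative compatibility of $\varepsilon$ is precisely how one proves it over a semiring, and your observation that deleting the overwritten line restores the original minor is what makes the bookkeeping go through.

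One concrete caution on part~(2): when you carry out the column side explicitly you get
\[
(adj_\varepsilon(A)\,A)_{ij}=\sum_{k=1}^{n} a_{kj}\,\varepsilon^{(i+k)}det_\varepsilon\bigl(A(k|i)\bigr),
\]
which matches the column-$i$ Laplace expansion of the matrix obtained from $A$ by replacing column~$i$ with column~$j$. Under the paper's stated convention ``$A_c(i\Rightarrow j)$ replaces the $j$-th column by the $i$-th column'', this is $A_c(j\Rightarrow i)$ rather than $A_c(i\Rightarrow j)$. This is at worst an index swap in the statement (harmless on the diagonal, which is the only case the paper actually uses), not a flaw in your method; just make the identification explicit so the reader is not left to guess which column is ``the relevant column''.
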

	\begin{definition}
		(See \cite{W1})
		Let $ S $ be a semiring and $A \in M_{m \times n}(S) $. The column space of $ A $ is the finitely generated right $ S $-subsemimodule of $M_{m \times 1}(S) $ generated by the columns of $A$:
		$$ Col(A)=\{ Av \vert v \in M_{n \times 1}(S) \}. $$
		The column rank of $A $ is the rank of its column subsemimodule, which is denoted by $colrank(A)$.
	\end{definition}
	\begin{definition}
		(See\cite{W1})
		Let $ S $ be a semiring and $A \in M_{m \times n}(S) $. The row space of $ A $ is the finitely generated left $ S $-subsemimodule of $M_{1 \times n}(S) $ generated by the rows of $A$:
		$$ Row(A)=\{ uA \vert u \in M_{1 \times m}(S) \}. $$
		The row rank of $A $ denoted by $rowrank(A)$ is the rank of its row subsemimodule.
	\end{definition}
	The next example shows that the column rank and the row rank of a matrix over an arbitrary semiring are not necessarily equal. If these two value  coincide, their common value is called the rank of matrix $ A $.
	\begin{ex}
		Consider $A \in M_{3}(S) $ where $ S=\mathbb{R}_{\max,+}$ as follows.\\
		\[
		A=\left[
		\begin{array}{ccc}
		3&6&5\\
		-5&0&-2\\
		4&1&6
		\end{array}
		\right].
		\]
		Clearly, $ rowrank(A)=3 $, but $colrank(A)=2$, since  the third column of $ A $ is a linear combination of its other columns:
		$$ C_{3}=\max(C_{1}+ 2, C_{2}+(-2)).$$
	\end{ex}
	Next, we study and analyze the system of linear equations $ AX=b $ where $A \in M_{m\times n}(S) $, $ b \in S^{m} $ and  $ X $ is an unknown column vector of size $n$ over tropical semiring $ S=\mathbb{R}_{\max,+}$, whose $i-$th equation is
	$$\max(a_{i1}+ x_{1}, a_{i2} +x_{2}, \cdots, a_{in} +x_{n})=b_{i}. $$
	Sometimes, we can simplify the solution process of the system, $ AX=b $, by turning that into a linear system of equations with fewer equations and variables. 
	\begin{definition}
		Let $A \in M_{m \times n}(S) $. A reduced matrix is obtained from matrix $ A $ by removing its  dependent rows and columns which we denote by $ \overline{A}$. 
	\end{definition}
	\begin{definition}
		A solution $X^{*}$ of the system $AX=b$ is called maximal if $X \leq X^{*}$ for any solution $X$.
	\end{definition}
	\begin{definition}
		Let $b\in S^{m}$. Then $ b $ is called a regular vector if 
		$ b_{i} \neq -\infty $ for any $i \in \underline{m}$.
	\end{definition}
	Without loss of generality, we can assume that $b$ is regular in the system $AX=b$. Otherwise, let $b_{i}=-\infty $ for some $i \in \underline{n}$. Then in the $i-$th equation of the system, we have $a_{ij} + x_{j}= -\infty $ for any $j \in \underline{n}$. As such, $x_{j}= -\infty $ if $a_{ij}\neq -\infty $. Consequently, the $i-$th equation can be removed from the system together with every column $A_{j}$ where $a_{ij} \neq -\infty $, and the corresponding $x_{j}$ can be set to $-\infty $.
	\begin{definition}
		Let $ A \in M_{n}(S)$, $ \lambda \in S $ and $ x \in S^{n} $ be a regular vector such that 
		$$ Ax=\lambda x.$$ 
		Then $\lambda $ is called an eigenvalue of $A $ and $ x $ an eigenvector of $ A $ associated with eigenvalue $\lambda $.
		Note that this definition allows an eigenvalue to be $ - \infty $. Moreover, eigenvectors are allowed to contain elements equal to $- \infty $.
	\end{definition}
	\section{Row-Column analysis of the system $ AX=b$ }
	In this section, we reduce the order of the system $AX=b$, where $A \in M_{m\times n}(S) $, $ b \in S^{m} $ and  $ X $ is an unknown column vector of size $n$, through a row-column analysis to obtain a new system with fewer equations and variables which is called a reduced system.\\
	\subsection{Column analysis of the system $ AX=b$}
	
	Suppose that $ C_{1},C_{2}, \cdots ,C_{n} $ are the columns of matrix $ A $. Without loss of generality, we can assume that $ C_{1},C_{2}, \cdots ,C_{k} $ are linearly independent and the other columns are linearly dependent on them.
	\begin{center}
		\[
		\left[
		\begin{array}{c|c|c|c|c|c|c}
		C_{1}&C_{2}&\cdots&C_{k}&C_{k+1}&\cdots&C_{n}
		\end{array}
		\right]
		\left[
		\begin{array}{c}
		x_{1}\\
		x_{2}\\
		\vdots\\
		x_{k}\\
		x_{k+1}\\
		\vdots\\
		x_{n}
		\end{array}
		\right]
		=
		\left[
		\begin{array}{c}
		b_{1}\\
		b_{2}\\
		\vdots\\
		b_{m}
		\end{array}
		\right].
		\]	
	\end{center}
	We can rewrite the system as follows.
	\begin{center}
		\begin{equation}
		\label{eq1}
		\max (C_{1} + x_{1},C_{2} + x_{2}, \cdots, C_{k} + x_{k}, C_{k+1} + x_{k+1}, \cdots, C_{n} + x_{n} ) =
		\left[
		\begin{array}{c}
		b_{1}\\
		b_{2}\\
		\vdots\\
		b_{m}
		\end{array}
		\right]. ~~~~~~~~~
		\end{equation}
	\end{center}
	There exist scalars $\eta_{ij} \in S $ for every $ 1\leq i\leq k $ and $ k+1\leq j\leq n $ such that 
	\begin{center}
		\begin{equation}
		\label{eq2}
		C_{j} = \max(C_{1} + \eta_{1j},C_{2} + \eta_{2j}, \cdots, C_{k} + \eta_{kj}).
		\end{equation}
	\end{center}
	By replacing $ (\ref{eq2}) $ in $(\ref{eq1})$ we have:\\
	\begin{center}
		$\max (C_{1} + x_{1},C_{2} + x_{2}, \cdots, C_{k} + x_{k}, \max(C_{1} + \eta_{1(k+1)},C_{2} + \eta_{2(k+1)}, \cdots, C_{k} + \eta_{k(k+1)}) + x_{k+1}, \cdots,  \max(C_{1} + \eta_{1n},C_{2} + \eta_{2n}, \cdots, C_{k} + \eta_{kn}) + x_{n} ) =
		\left[
		\begin{array}{c}
		b_{1}\\
		b_{2}\\
		\vdots\\
		b_{m}
		\end{array}
		\right].$
	\end{center}
	Due to the distributivity of $ ``+"$ over $`` \max" $, the following equality is obtained:\\
	\begin{center}
		$\max [C_{1} +\max(x_{1}, \eta_{1(k+1)} + x_{k+1},\cdots,\eta_{1n} + x_{n}),C_{2} +\max(x_{2}, \eta_{2(k+1)} + x_{k+1},\cdots,\eta_{2n} + x_{n}),\cdots,C_{k} +\max(x_{k}, \eta_{k(k+1)} + x_{k+1},\cdots,\eta_{kn} + x_{n})]=
		\left[
		\begin{array}{c}
		b_{1}\\
		b_{2}\\
		\vdots\\
		b_{m}
		\end{array}
		\right].$
	\end{center}
	Now, we can rewrite this system as
	\begin{center}
		$\max (C_{1} + y_{1},C_{2} + y_{2}, \cdots, C_{k} + y_{k} ) =
		\left[
		\begin{array}{c}
		b_{1}\\
		b_{2}\\
		\vdots\\
		b_{m}
		\end{array}
		\right],$
	\end{center}
	where 
	\begin{center}
		\begin{equation}
		\label{eq3}
		y_{i}= \max(x_{i}, \eta_{i(k+1)} + x_{k+1},\cdots,\eta_{in} + x_{n}),
		\end{equation}
	\end{center}
	for every $ 1\leq i\leq k $. As such, the number of variables decreases from $ n $ to $ k $.
	Next, we show that the existence of solutions of the system $ AX=b $ depends on the row rank of $ A $. Assume that $ Y^{*} =(y^{*}_{i})_{i=1}^{k} $ is the maximal solution of the system: 
	\begin{center}
		\[
		\left[
		\begin{array}{c|c|c|c}
		C_{1}&C_{2}&\cdots&C_{k}
		\end{array}
		\right]
		\left[
		\begin{array}{c}
		y^{*}_{1}\\
		y^{*}_{2}\\
		\vdots\\
		y^{*}_{k}
		\end{array}
		\right]
		=
		\left[
		\begin{array}{c}
		b_{1}\\
		b_{2}\\
		\vdots\\
		b_{m}
		\end{array}
		\right].
		\]
	\end{center}
	Hence, the equalities $ (\ref{eq3})$ imply the system $ AX=b $ should have solutions $ x_{j}\leq \min (y^{*}_{1} -\eta_{1j},y^{*}_{2} -\eta_{2j}, \cdots,y^{*}_{k} -\eta_{kj}) $ for every $ k+1 \leq j \leq n $ and $ x_{i} = y^{*}_{i}  $ for every $ 1 \leq i \leq k $.
	\subsection{Row analysis of the system $ AX =b $}
	Consider the system $ AX =b $ in the form of
	\begin{center}
		\begin{equation}
		\label{eq4}
		\left[
		\begin{array}{c}
		R_{1}\\ \hline
		R_{2}\\ \hline
		\vdots\\ \hline
		R_{h}\\ \hline
		R_{h+1}\\ \hline
		\vdots \\\hline
		R_{m}\\
		\end{array}
		\right]
		\left[
		\begin{array}{c}
		x_{1}\\
		x_{2}\\
		\vdots\\
		x_{n}
		\end{array}
		\right]
		=
		\left[
		\begin{array}{c}
		b_{1}\\
		b_{2}\\
		\vdots\\
		b_{h}\\
		b_{h+1}\\
		\vdots \\
		b_{m}
		\end{array}
		\right],
		\end{equation}
	\end{center}
	where $ R_{i} $ is the $ i $-th row of the matrix $ A $, for every $ 1 \leq i \leq m $. Without loss of generality, we can assume that $ R_{1}, R_{2}, \cdots, R_{h} $ are linearly independent rows of $A$ and the other rows $ R_{i} $, $h+1 \leq i \leq m $ are linear combinations of them. Consequently, there exist scalars $ \xi _{ij} \in S $ for every $ 1 \leq j \leq h $ and $ h+1 \leq i \leq m $ such that:
	\begin{center}
		\begin{equation}
		\label{eq5}
		R_{i}= \max (R_{1} + \xi_{i1}, R_{2} + \xi_{i2}, \cdots,R_{h} + \xi_{ih}),
		\end{equation}
	\end{center}
	for every $ h+1 \leq i \leq m$. We can now rewrite the system of equations $(\ref{eq4})$ as 
	\begin{center}
		$R_{i}
		\left[
		\begin{array}{c}
		x_{1}\\
		x_{2}\\
		\vdots\\
		x_{n}
		\end{array}
		\right]= b_{i} $, for any $ 1 \leq i \leq m$,
	\end{center} 
	which can become the $ h $-equation system:
	\begin{center}
		$R_{j}
		\left[
		\begin{array}{c}
		x_{1}\\
		x_{2}\\
		\vdots\\
		x_{n}
		\end{array}
		\right]= b_{j} $, for any $ 1 \leq j \leq h$.
	\end{center}
	We now obtain the row-reduced system with $h$ equations.\\
	Note that in the process of reducing the system $AX=b$, it does not matter which of the row or column analysis is first applied to the system.\\
	This argument leads us to investigate the existence of solutions of the linear system $ AX=b $. 
	\begin{theorem}\label{SEOVER}
		Let $A \in M_{m \times n}(S)$. The system $AX=b$ has solutions if and only if its reduced system, $\overline{A}Y=\overline{b}$, has solutions. 
	\end{theorem}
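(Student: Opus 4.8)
The plan is to prove the equivalence in two stages, which by the remark after the row analysis may be carried out in either order: first relate $AX=b$ to the column-reduced system in the variables $y_{1},\dots,y_{k}$, then relate that system to the row-reduced system $\overline{A}Y=\overline{b}$. Throughout, $C_{1},\dots,C_{k}$ denote the independent columns and $R_{1},\dots,R_{h}$ the independent rows, with the dependencies recorded by $(\ref{eq2})$ and $(\ref{eq5})$; $\overline{A}$ is the $h\times k$ submatrix on these rows and columns and $\overline{b}=(b_{1},\dots,b_{h})^{T}$.

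For the column stage, the forward direction is essentially the computation already carried out in the text: if $X=(x_{1},\dots,x_{n})$ solves $AX=b$, then $y_{i}$ defined by $(\ref{eq3})$ solves $\max(C_{1}+y_{1},\dots,C_{k}+y_{k})=b$ by distributivity of $+$ over $\max$. For the converse I would take a solution $(y_{1},\dots,y_{k})$ of the column-reduced system, set $x_{i}=y_{i}$ for $i\le k$, and for $k+1\le j\le n$ choose $x_{j}$ small enough that $\eta_{ij}+x_{j}\le y_{i}$ for every $i\le k$ (for instance $x_{j}\le\min_{i}(y_{i}-\eta_{ij})$, or simply $x_{j}=-\infty$); then in each equation of $AX=b$ every term $a_{lj}+x_{j}$ with $j>k$ equals $\max_{i\le k}(a_{li}+\eta_{ij}+x_{j})\le\max_{i\le k}(a_{li}+y_{i})$ and is therefore dominated by the independent-column terms, so the left-hand side is unchanged and still equals $b_{l}$.

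For the row stage, the column-reduced system is $\widehat{A}Y=b$ where $\widehat{A}$ is the $m\times k$ matrix with columns $C_{1},\dots,C_{k}$, and $\overline{A}Y=\overline{b}$ is obtained by deleting the equations indexed $h+1,\dots,m$; hence the direction ``$AX=b$ solvable $\Rightarrow$ reduced solvable'' is immediate by restriction. For the converse I would take a solution $Y$ of $\overline{A}Y=\overline{b}$, so $R_{j}Y=b_{j}$ for $j\le h$, and for a dependent row index $i$ apply $(\ref{eq5})$ to obtain $R_{i}Y=\max_{j\le h}(R_{j}Y+\xi_{ij})=\max_{j\le h}(b_{j}+\xi_{ij})$, which remains to be identified with $b_{i}$. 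This last equality is where I expect the real work to be: it says precisely that $b$ is compatible with the linear dependencies among the rows of $A$ — equivalently that $\overline{b}$ is built from $b$ with the same coefficients $\xi_{ij}$ appearing in $(\ref{eq5})$. On the ``only if'' side this compatibility is free (apply $(\ref{eq5})$ to a genuine solution of $AX=b$), so it is consistent to fold it into the formation of the reduced system; I would also record that column reduction leaves the relations $(\ref{eq5})$ intact, since substituting $(\ref{eq3})$ turns $R_{i}X=b_{i}$ into $R_{i}Y=\max_{j}(R_{j}Y+\xi_{ij})$, which is what allows the two stages to be interchanged.
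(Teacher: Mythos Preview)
Your proposal is correct and follows essentially the same route as the paper: the paper's proof also works through the row--column analysis of Section~3 and isolates the compatibility condition $b_{i}=\max_{j\le h}(b_{j}+\xi_{ij})$ (labelled~(\ref{eq6}) there) as the one nontrivial ingredient, deriving it from a solution of $AX=b$ just as you do. The paper is terser---it leaves the column-stage converse and the construction of $X$ from $Y$ to the discussion preceding the theorem---but the logical skeleton, including the point that (\ref{eq6}) must be assumed (or folded into the reduction) for the ``if'' direction, is the same.
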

	
	\begin{proof}
		Let $ colrank(A)=k$ and $rowrank(A)=h $. By applying row-column analysis on the system $AX=b$ and replacing $(\ref{eq5})$ in the $ m $-equation system $(\ref{eq4})$, we conclude that
		\begin{center}
			\begin{equation}
			\label{eq6}
			b_{i}=
			R_{i}
			\left[
			\begin{array}{c}
			x_{1}\\
			x_{2}\\
			\vdots\\
			x_{n}
			\end{array}
			\right]= \max(b_{1} + \xi_{i1}, b_{2} + \xi_{i2}, \cdots,b_{h} + \xi_{ih}),
			\end{equation}
		\end{center}
		for every $ h+1 \leq i \leq m$. If the equalities $(\ref{eq6})$ hold for every $ h+1 \leq i \leq m $, then we can reduce the system $ AX=b $ to the system $\overline{A}Y=\overline{b}$, where $ \overline{A}$ is the reduced $ h\times k $ matrix obtained from $ A $, $Y $ is an unknown vector of size $ k $, and $ \overline{b}$ is the reduced vector obtained from $ b $. Thus, the existence of solution $AX=b$ and $\overline{A}Y=\overline{b}$ depends on each other.
	\end{proof}
	\begin{remark}
		Note that if $\overline{b}$ is not a linear combination of any column of $\overline{A}$, then the systems $AX=b$ and $\overline{A}Y=\overline{b}$ have no solutions.
	\end{remark}
	\section{Solution methods for a linear system}
	In this section, we present methods for solving a linear system of equations.
	\subsection{Solving a linear system of equations using the pseudo-inverse of the system matrix}
	
	\begin{definition}
		Let $A \in M_{n}(S)$ and $det_{\varepsilon}(A) \in U(S)$. The pseudo-inverse of $A$, denoted by $ A^{-}$, is defined as
		\begin{center}
			$A^{-}=	det_{\varepsilon}(A)^{-1} adj_{\varepsilon}(A)$.
		\end{center}
		Especially, if $S=\mathbb{R}_{max,+}$, then $A^{-}=(a_{ij}^{-})$ where $a_{ij}^{-}= (adj_{\varepsilon}(A))_{ij} - det_{\varepsilon}(A)$.
	\end{definition}
	\begin{remark}\label{AA^{-}rem}
		Let $ A\in M_{n}(S)$. Then $ AA^{-}=((AA^{-})_{ij}) $ is a square matrix of size $ n $, such that by Theorem~\ref{r-c},
		\begin{align*}
		(AA^{-})_{ij} &= det_{\varepsilon}(A)^{-1} Aadj_{\varepsilon}(A))_{ij}\\
		&= det_{\varepsilon}(A)^{-1} det_{\varepsilon}(A_{r}(i \Rightarrow j)).
		\end{align*}
		In max-plus algebra, this becomes
		\begin{align*}
		(AA^{-})_{ij} &=(Aadj_{\varepsilon}(A))_{ij} - det_{\varepsilon}(A)\\
		& =det_{\varepsilon}(A_{r}(i \Rightarrow j)) - det_{\varepsilon}(A).
		\end{align*}
		The matrix $ A^{-}A $ is defined similarly. Note further that the diagonal entries of the matrices $AA^{-}$ and $A^{-}A$ are $0 $:
		\begin{align*}
		(AA^{-})_{ii} &= (Aadj_{\varepsilon}(A))_{ii} - det_{\varepsilon}(A)\\
		& = det_{\varepsilon}(A_{r}(i \Rightarrow i))-  det_{\varepsilon}(A)\\
		& = det_{\varepsilon}(A) -det_{\varepsilon}(A)\\
		& =  0
		\end{align*}
	\end{remark}
	
	\begin{theorem}\label{psuedothm}
		Let $A \in M_{n}(S)$ and $b \in S^{n}$ be a regular vector. Then $(AA^{-})_{ij} \leq b_{i} - b_{j}$ for any $i,j \in \underline{n}$ if and only if the system $AX=b$ has the maximal solution $X^{*}=A^{-}b$ where $X^{*}=(x_{i}^{*})_{i=1}^{n}$.
	\end{theorem}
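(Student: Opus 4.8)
The plan is to read everything off from the shape of the matrix products $AA^{-}$ and $A^{-}A$ recorded in Remark~\ref{AA^{-}rem}, using only that over $\mathbb{R}_{\max, +}$ the $i$-th coordinate of a matrix--vector product is $(MY)_i=\max_k(m_{ik}+y_k)$, together with associativity of the matrix/vector operations; no Laplace- or Cramer-type expansion of $det_{\varepsilon}$ should be needed. The statement carries the standing hypotheses that $b$ is regular and that $det_{\varepsilon}(A)\in U(S)$, so that $A^{-}$ and hence $X^{*}=A^{-}b$ are well defined.

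First I would dispose of the implication $(\Leftarrow)$. If $X^{*}=A^{-}b$ is a solution, then $(AA^{-})b=A(A^{-}b)=b$ by associativity, so for every $i$ and every fixed $j$ we get $b_i=\bigl((AA^{-})b\bigr)_i=\max_\ell\bigl((AA^{-})_{i\ell}+b_\ell\bigr)\ge(AA^{-})_{ij}+b_j$; since $b$ is regular, $b_j\in\mathbb{R}$, and rearranging gives $(AA^{-})_{ij}\le b_i-b_j$. This is the only place where regularity of $b$ is genuinely used, and it does not even require the maximality of $X^{*}$.

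For $(\Rightarrow)$ I would argue in two steps. Step 1: $X^{*}=A^{-}b$ is a solution. The hypothesis gives $\bigl(A(A^{-}b)\bigr)_i=\max_j\bigl((AA^{-})_{ij}+b_j\bigr)\le\max_j\bigl((b_i-b_j)+b_j\bigr)=b_i$, so $AA^{-}b\le b$; the reverse inequality is automatic from the single term $j=i$, since $(AA^{-})_{ii}=0$ by Remark~\ref{AA^{-}rem} forces $\bigl(AA^{-}b\bigr)_i\ge(AA^{-})_{ii}+b_i=b_i$. Hence $AA^{-}b=b$, i.e. $A^{-}b$ solves the system. Step 2: $X^{*}$ is maximal. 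Let $X$ be any solution, so $AX=b$; then by associativity $(A^{-}A)X=A^{-}(AX)=A^{-}b=X^{*}$, while the fact that the diagonal entries of $A^{-}A$ are $0$ (Remark~\ref{AA^{-}rem} again) gives, for each $j$, $\bigl((A^{-}A)X\bigr)_j=\max_k\bigl((A^{-}A)_{jk}+x_k\bigr)\ge(A^{-}A)_{jj}+x_j=x_j$. Combining the two, $x_j\le x^{*}_j$ for all $j$, that is $X\le X^{*}$, so $X^{*}$ is the maximal solution. (Using monotonicity of $Y\mapsto A^{-}Y$ in place of the equality $AX=b$ in Step 2 shows the slightly stronger fact that $A^{-}b$ dominates every $X$ with $AX\le b$, not just every solution.)

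Thus the proof is essentially immediate once Remark~\ref{AA^{-}rem} (equivalently Theorem~\ref{r-c}) is available, and the ``main obstacle'' is conceptual rather than computational: it is the observation that the vanishing of the diagonals of $AA^{-}$ and $A^{-}A$ is exactly what both upgrades the subsolution $A^{-}b$ to an honest solution and pins it above every other solution. The remaining care points are purely bookkeeping --- tracking the regularity of $b$ in $(\Leftarrow)$ and the unit hypothesis $det_{\varepsilon}(A)\in U(S)$ throughout.
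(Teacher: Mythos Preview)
Your proof is correct and follows the same overall architecture as the paper's: both directions hinge on the identity $(AA^{-})_{ii}=(A^{-}A)_{ii}=0$ from Remark~\ref{AA^{-}rem}, with $AA^{-}$ used to verify that $X^{*}=A^{-}b$ is a solution and $A^{-}A$ used for maximality. The $(\Leftarrow)$ direction and Step~1 of $(\Rightarrow)$ are essentially identical to the paper's.

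Where you diverge is in Step~2 (maximality), and your argument is cleaner. The paper, after noting $A^{-}AX^{*}=X^{*}$, assumes ``without loss of generality'' that a competing solution $Y$ differs from $X^{*}$ in exactly one coordinate, and then splits into cases according to whether the inequalities $(A^{-}A)_{jl}\le x^{*}_{j}-x^{*}_{l}$ are proper. You instead observe directly that for \emph{any} solution $X$ one has $(A^{-}A)X=A^{-}b=X^{*}$, and then the single term $k=j$ in $\bigl((A^{-}A)X\bigr)_{j}=\max_{k}\bigl((A^{-}A)_{jk}+x_{k}\bigr)$ already gives $x_{j}\le x^{*}_{j}$. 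This avoids the WLOG reduction and the case analysis entirely, and as you remark, it even yields the stronger statement that $A^{-}b$ dominates every subsolution $AX\le b$.
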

	\begin{proof}
		Suppose that $(AA^{-})_{ij} \leq b_{i} - b_{j}$ for any $i,j \in \underline{n}$. First, we show that the system $AX=b$ has the solution $X^{*}=A^{-}b$. Clearly, $AX^{*}=AA^{-}b$, so for any $i\in \underline{n}$:
		\begin{align*}
		(AX^{*})_{i} &= (AA^{-}b)_{i}\\
		& =\max_{j=1}^{n}((AA^{-})_{ij}+b_{j})\\
		& =\max ((AA^{-})_{ii}+b_{i}, \max_{i \neq j} ((AA^{-})_{ij}+b_{j})).
		\end{align*}
		Since for any $i, j \in \underline{n}$, $(AA^{-})_{ij}+ b_{j} \leq b_{i}$, and $(AA^{-})_{ii}+ b_{i} = b_{i} $, we have $(AX^{*})_{i}= b_{i}$. As such, $X^{*}$ is a solution of the system $AX=b$.\\
		Now, we prove $X^{*}$ is a maximal solution. $AX^{*}=b$, so $A^{-}AX^{*}=X^{*}$.  The $k$-th equation of the system $A^{-}AX^{*}=X^{*}$ is
		\begin{center}
			$	max ((A^{-}A)_{k1}+x^{*}_{1}, \cdots, x^{*}_{k}, \cdots, (A^{-}A)_{kn}+x^{*}_{n} )=x^{*}_{k},$
		\end{center}
		that implies 
		\begin{center}
			\begin{equation}
			\label{eq7}
			(A^{-}A)_{kl} \leq x^{*}_{k} - x^{*}_{l}.
			\end{equation}
		\end{center}
		for any $l \neq k$. Now, suppose that $Y=(y_{i})_{i=1}^{n}$ is another solution of the system $AX=b$. This means $AY=b$, and $(A^{-}A )Y=X^{*}$. Without loss of generality, we can assume there exists only $j \in \underline{n}$ such that $y_{j} \neq x_{j}^{*}$, i.e., $y_{i}=x_{i}^{*}$ for any $i \neq j$. The $j$-th equation of the $A^{-}AY=X^{*}$ is 
		\begin{center}
			$max ((A^{-}A)_{j1}+x^{*}_{1}, \cdots, (A^{-}A)_{jj}+y_{j}, \cdots, (A^{-}A)_{jn}+x^{*}_{n})=x^{*}_{j}$.
		\end{center}
		This means $(A^{-}A)_{jj}+y_{j} \leq x^{*}_{j}$ which implies $y_{j} < x_{j}^{*}$. Moreover, if all inequalities $(\ref{eq7})$  for $k=j$ are proper, then 
		\begin{center}
			$max((A^{-}A)_{j1}+x^{*}_{1}, \cdots, y_{j}, \cdots, (A^{-}A)_{jn}+x^{*}_{n}) < x^{*}_{j}$.
		\end{center}
		Hence, $Y$ is not the solution of the system $AX=b$. That leads to a contradiction.\\
		This happens if all inequalities in $(\ref{eq7})$ are proper, so we can conclude that $X^{*}$ is a unique solution of the system $AX=b$. Otherwise, if some of the inequalities are not proper, i.e., $(A^{-}A)_{jl}=x^{*}_{j}-x^{*}_{l}$ for some $l \neq j$, then $Y$ is a solution of the system $AX=b$ such that $Y \leq X^{*}$. Consequently, $X^{*}$ is a maximal solution.\\
		Conversely, suppose that $X^{*}=A^{-}b$ is a maximal solution of the system $AX=b$. Then $AA^{-}b=b$. That implies $(AA^{-})_{ij} \leq b_{i}- b_{j}$ for any $i, j \in \underline{n}$.
	\end{proof}
	In the following example, we show that $(AA^{-})_{ij} \leq b_{i} - b_{j}$ is a sufficient condition for the system $ AX=b $ to have the maximal solution $ X^{*}=A^{-}b $.
	\begin{ex}
		Let $ A\in M_{4}(S) $. Consider the following system $ AX=b $:	
		\[
		\left[
		\begin{array}{cccc}
		1&-6&2&-5\\
		4&5&1&-2\\
		7&-1&3&0\\
		-2&-9&-5&0
		\end{array}
		\right]
		\left[
		\begin{array}{c}
		x_{1}\\
		x_{2}\\
		x_{3}\\
		x_{4}
		\end{array}
		\right]
		=
		\left[
		\begin{array}{c}
		2\\
		7\\
		3\\
		-4
		\end{array}
		\right],
		\]
		where $det_{\varepsilon}(A)=14$. Due to Theorem~\ref{psuedothm}, we must check the condition $(AA^{-})_{ij} \leq b_{i} - b_{j}$ for any $i,j \in \{ 1,\cdots ,4 \} $ where 
		$ (AA^{-})_{ij}=(Aadj_{\varepsilon}(A))_{ij} - det_{\varepsilon}(A) = det_{\varepsilon}(A_{r}(i \Rightarrow j)) - det_{\varepsilon}(A) $ (see Theorem~\ref{r-c} ). As such, $ AA^{-} $ is
		\[
		\left[
		\begin{array}{cccc}
		0&-11&-6&-5\\
		-1&0&-3&-2\\
		1&-6&0&0\\
		-7&-14&-9&0\\
		\end{array}
		\right].
		\]
		Indeed, it is easier to check $ (AA^{-})_{ij} \leq b_{i} - b_{j} \leq -(AA^{-})_{ji} $ for any $ 1 \leq i\leq j \leq 4 $.\\
		Since these inequalities hold, for instance $ (AA^{-})_{12} \leq 2 - 7 \leq -(AA^{-})_{21} $, the system $ AX=b $ has the maximal solution $ X^{*}=A^{-}b$:
		\[
		X^{*}=\left[
		\begin{array}{cccc}
		-6&-13&-7&-7\\
		-6&-5&-8&-7\\
		-2&-13&-8&-7\\
		-7&-14&-9&0
		\end{array}
		\right]
		\left[
		\begin{array}{c}
		2\\
		7\\
		3\\
		-4
		\end{array}
		\right]
		=
		\left[
		\begin{array}{c}
		-4\\
		2\\
		0\\
		-4
		\end{array}
		\right].
		\]
	\end{ex}
	The next example shows that the condition of Theorem~\ref{psuedothm} is  necessary. 
	\begin{ex}
		Let $ A\in M_{4}(S) $. Consider the following system $ AX=b $:	
		\[
		\left[
		\begin{array}{cccc}
		5&2&8&10\\
		4&1&7&9\\
		3&7&9&11\\
		-1&0&2&4
		\end{array}
		\right]
		\left[
		\begin{array}{c}
		x_{1}\\
		x_{2}\\
		x_{3}\\
		x_{4}
		\end{array}
		\right]
		=
		\left[
		\begin{array}{c}
		7\\
		4\\
		8\\
		1
		\end{array}
		\right].
		\]
		Then the matrix $ AA^{-}$ is as follows.
		\[
		AA^{-}=\left[
		\begin{array}{cccc}
		0&1&-1&6\\
		-1&0&-2&5\\
		1&2&0&7\\
		-6&-5&-7&0\\
		\end{array}
		\right].
		\]
		It can be checked that $ (AA^{-})_{ij} \leq b_{i} - b_{j} \leq -(AA^{-})_{ji} $ do not hold for  $ i =2 $ or $j=2 $.
		Therefore, $ X^{*}=A^{-}b$ cannot be the solution of the system $ AX=b $, where $ X^{*} $ is 
		\[
		X^{*}=\left[
		\begin{array}{cccc}
		-5&-4&-6&1\\
		-6&-5&-7&0\\
		-8&-7&-9&-2\\
		-10&-9&-11&-4
		\end{array}
		\right]
		\left[
		\begin{array}{c}
		7\\
		4\\
		8\\
		1
		\end{array}
		\right]
		=
		\left[
		\begin{array}{c}
		2\\
		1\\
		-1\\
		-3
		\end{array}
		\right].
		\]
		If $ X^{*} $ is the solution of the system $ AX=b $, then in the second equation of the system we encounter a contradiction:
		$$ \max (a_{21}+x^{*}_{1}, a_{22}+x^{*}_{2}, a_{23}+x^{*}_{3}, a_{24}+x^{*}_{4})= 6\neq b_{2} .$$
	\end{ex}
	\subsubsection{\label{nonsquare}\textbf{Extension of the method to non-square linear systems}}
	We are interested in studying the solution of a non-square linear system of equations as well. Let $ A \in M_{m\times n}(S)$ with $ m \neq n $, and $ b \in S^{m} $ be a regular vector. For solving the non-square system $ AX=b $ by Theorem~\ref{psuedothm}, we must consider a square linear system of order $ min\{ m, n \}$ corresponding to it. Without loss of generality, we can assume $ A $ is a reduced matrix, i.e. the number of independent rows(columns) is m(n), respectively. Since $ m \neq n $, we have the following two cases:
	\begin{enumerate}
		\item If $ m < n $, then we consider the square linear system of order $ m $ corresponding to the system $AX=b$. Let $ X=A^{T}Y $ where $ Y $ is an unknown vector of size $ m $. Then the square linear system $ AA^{T}Y=b $ is obtained from replacing $ X $ in $ AX=b $. Suppose that the conditions of Theorem~\ref{psuedothm} hold for the system $ AA^{T}Y=b $, so the system $ AA^{T}Y=b $ has the maximal solution $ Y^{*}=(AA^{T})^{-}b $. If so, the system $ AX=b $ has (at least) a solution in the form of $ X = A^{T}Y^{*} = A^{T}(AA^{T})^{-}b $, which is not necessarily maximal. The matrix $ A^{T}(AA^{T})^{-} $, denoted by $ A^{\dagger}$, is called the semi-psuedo-inverse of matrix $ A $. Hence, the system $ AX=b $ has the solution $ X=A^{\dagger}b $.
		\item If $ n < m $, then we consider the square linear system of size $ n $ corresponding to the system $ AX=b $. Clearly, we have the square linear system $ A^{T}AX=A^{T}b $ of size $ n $. Assume that the conditions of Theorem~\ref{psuedothm} hold for the system $ A^{T}AX=A^{T}b $. If so, it has the maximal solution $ X^{*}= (A^{T}A)^{-}A^{T}b $. Note further that $ X^{*} =(A^{T}A)^{-}A^{T}b $ is not necessarily the solution of the system $ AX=b $ unless $ b $ is an eigenvector of $ A(A^{T}A)^{-}A^{T} $ corresponding to the eigenvalue $ 0 $, i.e.; $AX^{*} = A(A^{T}A)^{-}A^{T}b = b $.
	\end{enumerate}		
	In the next examples, we try to solve some non-square linear systems if possible.
	\begin{ex}
		Let $ A\in M_{4 \times 5}(S) $. Consider the following system $ AX=b $:
		\[
		\left[
		\begin{array}{ccccc}
		-4&7&12&-3&0\\
		3&2&8&3&-1\\
		-9&1&6&0&2\\
		2&8&-5&1&-3
		\end{array}
		\right]
		\left[
		\begin{array}{c}
		x_{1}\\
		x_{2}\\
		x_{3}\\
		x_{4}\\
		x_{5}
		\end{array}
		\right]=
		\left[
		\begin{array}{c}
		14\\
		10\\
		8\\
		11
		\end{array}
		\right].	
		\]
		Due to the extension method
		, the non-square system $ AX=b $ can be converted into the following square system $ AA^{T}Y=b $, cosidering $ X=A^{T}Y $:
		\[
		\left[
		\begin{array}{cccc}
		24&20&18&15\\
		20&16&14&10\\
		18&14&12&9\\
		15&10&9&16
		\end{array}
		\right]
		\left[
		\begin{array}{c}
		y_{1}\\
		y_{2}\\
		y_{3}\\
		y_{4}
		\end{array}
		\right]=
		\left[
		\begin{array}{c}
		14\\
		10\\
		8\\
		11
		\end{array}
		\right].	
		\]
		The conditions of Theorem~\ref{psuedothm} hold for the system $ AA^{T}Y=b $, that is $ ((AA^{T})(AA^{T})^{-})_{ij} \leq b_{i} -b_{j} $ for any $ i, j \in \{ 1, \cdots, 4 \} $, where $ (AA^{T})(AA^{T})^{-} $ is the following matrix:
		\[
		\left[
		\begin{array}{cccc}
		0&4&6&-1\\
		-4&0&2&-5\\
		-6&-2&0&-7\\
		-9&-5&-3&0
		\end{array}
		\right].
		\]
		As such, the system $ AA^{T}Y=b $ has the maximal solution $ Y^{*}=(AA^{T})^{-}b $:
		\[
		Y^{*}=\left[
		\begin{array}{cccc}
		-24&-20&-18&-25\\
		-20&-16&-14&-21\\
		-18&-14&-12&-19\\
		-25&-21&-19&-16
		\end{array}
		\right]
		\left[
		\begin{array}{c}
		14\\
		10\\
		8\\
		11
		\end{array}
		\right]=
		\left[
		\begin{array}{c}
		-10\\
		-6\\
		-4\\
		-5
		\end{array}
		\right].	
		\]
		Hence, $ X=A^{T}Y^{*} $ is a solution of the non-square system $ AX=b $:
		\[
		X=\left[
		\begin{array}{c}
		-3\\
		3\\
		2\\
		-3\\
		-2
		\end{array}
		\right],	
		\]
		which is not necessarily maximal solution.
	\end{ex}
	\begin{ex}
		Let $ A \in M_{4 \times 3}(S) $. Consider the following non-square system $ AX=b $:
		\[
		\left[
		\begin{array}{ccc}
		2&5&-2\\
		1&4&3\\
		7&8&1\\
		0&1&4
		\end{array}
		\right]
		\left[
		\begin{array}{c}
		x_{1}\\
		x_{2}\\
		x_{3}
		\end{array}
		\right]=
		\left[
		\begin{array}{c}
		8\\
		3\\
		5\\
		2
		\end{array}
		\right].	
		\]
		According to the second case of the extension method
		, the following square system $ A^{T}AX=A^{T}b $ is obtained from  the non-square system\\ $ AX=b $:
		\[
		\left[
		\begin{array}{ccc}
		14&15&8\\
		15&16&9\\
		8&9&8
		\end{array}
		\right]
		\left[
		\begin{array}{c}
		x_{1}\\
		x_{2}\\
		x_{3}
		\end{array}
		\right]=
		\left[
		\begin{array}{c}
		12\\
		13\\
		6
		\end{array}
		\right].	
		\]
		Since the conditions $ ((A^{T}A)(A^{T}A)^{-})_{ij} \leq (A^{T}b)_{i} - (A^{T}b)_{j} $ hold for any $ i,j \in \{ 1,2,3 \} $, where $ (A^{T}A)(A^{T}A)^{-} $ is the following matrix:
		\[
		\left[
		\begin{array}{ccc}
		0&-1&0\\
		-1&0&-1\\
		-6&-7&0
		\end{array}
		\right],
		\]
		by Theorem~\ref{psuedothm}, the system $ A^{T}AX= A^{T}b $ has the maximal solution $ X^{*}= (A^{T}A)^{-}A^{T}b $:
		\[
		X^{*}=\left[
		\begin{array}{ccc}
		-14&-15&-14\\
		-15&-16&-15\\
		-14&-15&-8
		\end{array}
		\right]
		\left[
		\begin{array}{c}
		12\\
		13\\
		6
		\end{array}
		\right]=
		\left[
		\begin{array}{c}
		-2\\
		-3\\
		-2
		\end{array}
		\right],	
		\]
		while $ X^{*} $ is not a solution of the system $ AX=b $:
		\[
		AX^{*}=\left[
		\begin{array}{ccc}
		2&5&-2\\
		1&4&3\\
		7&8&1\\
		0&1&4
		\end{array}
		\right]
		\left[
		\begin{array}{c}
		-2\\
		-3\\
		-2
		\end{array}
		\right]=
		\left[
		\begin{array}{c}
		2\\
		-1\\
		5\\
		2
		\end{array}
		\right] \neq b	
		\]
		In this manner, we actually solve the nearest square system corresponding to the system $ AX= b $. 	
	\end{ex}
	\subsection{Extended Cramer's rule  for solving a linear system of equations}
	
	In classic linear algebra, Cramer's rule determines the unique solution of a linear system of equations, $ AX=b $, when $ A $ is an invertible matrix. Furthermore, Tan shows that an invertible matrix $ A $ over a commutative semiring has the inverse $ A^{-1}= det_{\varepsilon}(A)^{-1}adj_{\varepsilon}(A) $ (see \cite{T3}).\\
	In \cite{S1}, Sararnrakskul proves a square matrix $A$ over a semifield is invertible if and only if every row and every column of $A$ contains exactly one nonzero element. Moreover, in \cite{T3}, Tan develops Cramer's rule for a system $AX=b$ when $A$ is an invertible matrix over a commutative semiring. Consequently, using Cramer's rule for solving a system of linear equations over semifields and, as a special case, over tropical semirings, seems to be limited to matrices containing exactly one nonzero element in every row and every column.\\
	In the next theorem, we present an extended version of Cramer's rule to determine the maximal solution of the system $ AX=b $ by using the pseudo inverse of matrix $ A $, where $A$ is an arbitrary square matrix.
	\begin{theorem}\label{ECramer}
		Let $ A \in M_{n}(S) $, $ b\in S ^{n} $ be a regular vector, and $ det_{\varepsilon}(A) \in U(S)$. Then the system $ AX=b$ has the maximal solution $$ X^{*}=(d^{-1}d_{1}, d^{-1}d_{2}, \cdots, d^{-1}d_{n})^{T} $$ if and only if $ (AA^{-})_{ij} \leq b_{i} -b_{j} $ for every $ i,j \in \underline{n} $, where $ d= det_{\varepsilon}(A) $ and $d_{j} =  det_{\varepsilon}(A_{[j]})$ for $ j \in \underline{n} $ and $A_{[j]} $ is the matrix formed by replacing the $ j $-th column of $ A $ by the column vector $b $.
	\end{theorem}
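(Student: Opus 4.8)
The plan is to deduce Theorem~\ref{ECramer} directly from Theorem~\ref{psuedothm}. The two statements share the same hypothesis, $(AA^{-})_{ij}\le b_i-b_j$ for all $i,j\in\underline{n}$, and Theorem~\ref{psuedothm} already tells us this is equivalent to the solvability of $AX=b$ with maximal solution $A^{-}b$. So the only thing that needs to be proved is the purely algebraic identity
$$(A^{-}b)_j = d^{-1}d_j \qquad\text{for every } j\in\underline{n},$$
where $d=det_{\varepsilon}(A)$ and $d_j=det_{\varepsilon}(A_{[j]})$; once this is established, substituting $X^{*}=A^{-}b=(d^{-1}d_1,\dots,d^{-1}d_n)^T$ into the conclusion of Theorem~\ref{psuedothm} finishes the argument.

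To prove the identity, recall $A^{-}=d^{-1}adj_{\varepsilon}(A)$, so by the semimodule axioms $A^{-}b=d^{-1}(adj_{\varepsilon}(A)\,b)$ and it suffices to show $(adj_{\varepsilon}(A)\,b)_j=d_j$. First I would note that the $j$-th row of $adj_{\varepsilon}(A)$ equals the $j$-th row of $adj_{\varepsilon}(A_{[j]})$: by definition $(adj_{\varepsilon}(A))_{jk}=\varepsilon^{(k+j)}det_{\varepsilon}(A(k|j))$, and since $A$ and $A_{[j]}$ differ only in the $j$-th column — which is deleted in forming $A(k|j)$ — we have $A(k|j)=A_{[j]}(k|j)$, hence $(adj_{\varepsilon}(A))_{jk}=(adj_{\varepsilon}(A_{[j]}))_{jk}$ for all $k$. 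Second, because $b$ is exactly the $j$-th column of $A_{[j]}$, the sum $(adj_{\varepsilon}(A)\,b)_j=\sum_k (adj_{\varepsilon}(A_{[j]}))_{jk}\,(A_{[j]})_{kj}$ is precisely the $(j,j)$-entry of the matrix product $adj_{\varepsilon}(A_{[j]})\,A_{[j]}$. Applying part~(2) of Theorem~\ref{r-c} to the matrix $A_{[j]}$ gives $(adj_{\varepsilon}(A_{[j]})\,A_{[j]})_{jj}=det_{\varepsilon}\big((A_{[j]})_c(j\Rightarrow j)\big)=det_{\varepsilon}(A_{[j]})=d_j$, since replacing the $j$-th column of a matrix by its own $j$-th column changes nothing. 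This yields $(A^{-}b)_j=d^{-1}d_j$, i.e. $A^{-}b=(d^{-1}d_1,\dots,d^{-1}d_n)^T=X^{*}$, and the theorem follows by quoting Theorem~\ref{psuedothm} verbatim.

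I expect the only delicate point to be the index bookkeeping around the transpose in the definition of $adj_{\varepsilon}$ and the $\varepsilon^{(i+j)}$ powers — in particular making sure the adjoint entry paired with $b_k$ is the one indexed by the deleted \emph{row} $k$, so that $A(k|j)$ rather than $A(j|k)$ occurs; this is precisely what makes the sum coincide with a column of $A_{[j]}$ and lets Theorem~\ref{r-c} take over. Over $S=\mathbb{R}_{\max,+}$ with the identity $\varepsilon$-function the $\varepsilon$-powers vanish and the identity reads $(A^{-}b)_j=\max_k\big(det_{\varepsilon}(A(k|j))+b_k\big)-det_{\varepsilon}(A)$, which is just a cofactor-style expansion of $det_{\varepsilon}(A_{[j]})$ along its $j$-th column; the route through Theorem~\ref{r-c} simply packages this uniformly for any commutative semiring with an $\varepsilon$-function without having to set up a Laplace expansion from scratch.
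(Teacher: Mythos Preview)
Your proposal is correct and follows essentially the same architecture as the paper's proof: both invoke Theorem~\ref{psuedothm} to reduce the statement to the algebraic identity $(A^{-}b)_j=d^{-1}d_j$, and both verify this identity by a cofactor-style expansion of $det_{\varepsilon}(A_{[j]})$ along its $j$-th column. The one difference is in how that expansion is justified: the paper quotes Laplace's theorem for semirings (Theorem~3.3 in~\cite{T1}) directly, whereas you obtain the same conclusion by observing that the $j$-th row of $adj_{\varepsilon}(A)$ coincides with that of $adj_{\varepsilon}(A_{[j]})$ and then applying Theorem~\ref{r-c}(2) to $A_{[j]}$ --- a slightly more self-contained route since Theorem~\ref{r-c} is already stated in the paper, at the cost of one extra observation about deleting the replaced column.
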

	\begin{proof}
		By theorem~\ref{psuedothm}, the inequalities $ (AA^{-})_{ij} \leq b_{i} -b_{j} $ for every $i,j \in \underline{n}$  are equivalent to have the maximal solution $ X^{*}=A^{-}b$ for the system $ AX=b $, so 
		\begin{align*}
		X^{*} &=A^{-}b \\
		& =det_{\varepsilon}(A)^{-1} adj_{\varepsilon}(A)b\\
		& =det_{\varepsilon}(A)^{-1}
		\left[
		\begin{array}{ccc}
		A_{11}&\cdots&A_{1n}\\
		\vdots&\ddots&\vdots\\
		A_{n1}&\cdots&A_{nn}\\
		\end{array}
		\right]
		\left[
		\begin{array}{c}
		b_{1}\\
		\vdots\\
		b_{n}
		\end{array}
		\right]\\
		& =det_{\varepsilon}(A)^{-1}
		\left[
		\begin{array}{c}
		\max (A_{11}+ b_{1}, \cdots, A_{1n}+ b_{n})\\
		\vdots\\
		\max (A_{n1}+ b_{1}, \cdots, A_{nn}+ b_{n})
		\end{array}
		\right],	
		\end{align*}
		where $ A_{ij}= (adj_{\varepsilon}(A))_{ij}= det_{\varepsilon}(A(j|i)) $. The $ j $-th component of $ X^{*} $ is:
		\begin{align}
		x^{*}_{j} & \nonumber =det_{\varepsilon}(A)^{-1}\max (A_{j1}+ b_{1}, \cdots, A_{jn}+ b_{n})\\ \nonumber
		& =det_{\varepsilon}(A)^{-1} \max(det_{\varepsilon}(A(1|j))+ b_{1}, \cdots, det_{\varepsilon}(A(n|j))+ b_{n})\\ 
		& =det_{\varepsilon}(A)^{-1} det_{\varepsilon}(
		\left[
		\begin{array}{ccccccc}
		a_{11}&\cdots&a_{1(j-1)}&b_{1}&a_{1(j+1)}\cdots&a_{1n}\\
		\vdots&~&\vdots&\vdots&\vdots&\vdots&~\\
		a_{n1}&\cdots&a_{n(j-1)}&b_{n}&a_{n(j+1)}\cdots&a_{nn}
		\end{array}
		\right]
		)\\ \nonumber
		&\ = det_{\varepsilon}(A)^{-1} det_{\varepsilon}(A_{[j]}) \\ \nonumber
		&\ =d^{-1}d_{j},
		\end{align}
		i.e., $X^{*}=(d^{-1}d_{1},d^{-1}d_{2},\cdots,d^{-1}d_{n})^{T}$.
		Note that the equality $ (4.2) $ is obtained from Laplace's theorem for semirings (see Theorem~3.3 in~\cite{T1}).
	\end{proof}
	\begin{remark}
		In classic linear algebra, the unique solution of the system $ AX=b $, when $ det(A) \neq 0$, can be obtained from Cramer's rule without calculating the inverse matrix of $ A$. Similarly, in max-plus linear algebra, we can use the extended Cramer's rule to get the maximal solution of the system $ AX=b $, when $ (AA^{-})_{ij} \leq b_{i}- b_{j} $ for any $ i,j \in \underline{n}$, without computing $ A^{-}$ (see Remark~\ref{AA^{-}rem}).
	\end{remark}
	\begin{ex}
		Let $ A \in M_{3}(S)$. Consider the following system $ AX= b$:
		\[
		\left[
		\begin{array}{ccc}
		5&2&6\\
		4&1&4\\
		3&7&14
		\end{array}
		\right]
		\left[
		\begin{array}{c}
		x_{1}\\
		x_{2}\\
		x_{3}
		\end{array}
		\right]
		=
		\left[
		\begin{array}{c}
		5\\
		4\\
		13
		\end{array}
		\right].
		\]
		Clearly, $ (AA^{-})_{ij} \leq b_{i} -b_{j} \leq -(AA^{-})_{ji} $ hold for any $ 1 \leq i\leq j \leq 3 $, where $ AA^{-}$ is the following matrix:
		\[
		\left[
		\begin{array}{ccc}
		0&1&-8\\
		-1&0&-9\\
		5&6&0
		\end{array}
		\right]
		\]
		Due to Theorem~\ref{ECramer}, the system $ AX=b $ has the maximal solution $ X^{*}=(x^{*}_{i})_{i=1}^{3}$ where:
		\[
		x^{*}_{1}=det_{\varepsilon}(\left[
		\begin{array}{ccc}
		5&2&6\\
		4&1&4\\
		13&7&14
		\end{array}
		\right]) -det_{\varepsilon}(A)= 20 - 20 = 0,
		\]
		\[
		x^{*}_{2}=det_{\varepsilon}(\left[
		\begin{array}{ccc}
		5&5&6\\
		4&4&4\\
		3&13&14
		\end{array}
		\right]) -det_{\varepsilon}(A)= 23 - 20 = 3,
		\]
		\[
		x^{*}_{3}=det_{\varepsilon}(\left[
		\begin{array}{ccc}
		5&2&5\\
		4&1&4\\
		3&7&13
		\end{array}
		\right]) -det_{\varepsilon}(A)= 19 - 20 = -1.
		\]
	\end{ex}
	\section{Concluding Remarks}\label{remarks}
	In this paper, we studied the order reduction of  systems of linear equations through a row-column analysis technique over tropical semirings in order to simplify their solution process. Using the pseduo-inverse of the matrix of a linear system of equations, we also presented necessary and sufficient conditions for the system to have a maximal solution. We obtained this maximal solution through a new version of Cramer's rule as well.


	
	\newpage
	\appendix
	\begin{center}
		\begin{table}[!tb]
			\begin{verbatim}
				MaxPlusDet := proc (A::Matrix) 
				local i, j, s, n, detA, ind, K, V; 
				description "This program finds the determinant of a square matrix in max-plus."; 
				Use LinearAlgebra in
				n := ColumnDimension(A); 
				V := Matrix(n); 
				ind := Vector(n);
				if n = 1 then
				   V := A[1, 1];
				   detA := V;
				   ind[1] := 1
				elif n = 2 then
				   V[1, 1] := A[1, 1]+ A[2, 2];
				   V[1, 2] := A[1, 2]+ A[2, 1]; 
				   V[2, 1] := A[1, 2]+ A[2, 1];
				   V[2, 2] := A[1, 1]+ A[2, 2];
				   detA := max(V);
				   for s to 2 do 
				     K := V[s, 1 .. 2];
				     ind[s] := max[index](K)
				   end do;
				else
				   for i to n do
				     for j to n do
				       V[i, j] := A[i, j]+ op(1, MaxPlusDet(A[[1 .. i-1, i+1 .. n], [1 .. j-1, j+1 .. n]]));
				     end do; 
				     detA := max(V);
				     K := V[i, 1 .. n];
				     ind[i] := max[index](K);
				   end do;
				end if;
				end use:
				[detA, ind, V]			
				end proc:
			\end{verbatim}
			\caption{Finding the determinant of a square matrix in max-plus} \label{tab1}
		\end{table}
	\end{center}
	
	\begin{center}
		\begin{table}[!tb]
			\begin{verbatim}
				Matmul := proc (A::Matrix, B::Matrix) 
				local i, j, m, n, p, q, C, L;
				description "This program finds the multiplication of two matrices in max-plus.";\\
				Use LinearAlgebra in
				m := RowDimension(A);
				n := ColumnDimension(A);
				p := RowDimension(B);
				q := ColumnDimension(B);
				C := Matrix(m, q); 
				if n <> p then
				   print('impossible');
				   break
				else
				   for i to m do
				     for j to q do
				       L := [seq(A[i, k]+B[k, j], k = 1 .. n)];
				       C[i, j] := max(L)
				     end do
				   end do
				end if;
				end use:
				C
				end proc:
			\end{verbatim}
			\caption{Calculation of matrix multiplication in max-plus} \label{tab3}
		\end{table}
	\end{center}
	
	\begin{center}
		\begin{table}[!tb]
			\begin{verbatim}
				Ainv := proc (A::Matrix)
				local n, d, H, V, B, C, E, G, Z, i, j;
				description "This program finds a pseudo-inverse of A in max-plus.";
				use LinearAlgebra in
				n := ColumnDimension(A);
				d := op(1, MaxPlusDet(A));
				H := Matrix(1 .. n, 1 .. n, d);
				Z := Matrix(1 .. n, 1 .. n);
				for i to n do
				   for j to n do
				     if A[i, j] = (-1)*Float(infinity) then
				       Z[i, j] := Float(infinity);
				     else
				       Z[i, j] := A[i, j];
				     end if:
				   end do:
				end do:
				V := op(3, MaxPlusDet(A));
				B := V-H-Z;
				G := Transpose(B);
				C := Matmul(A, G);
				E := Matmul(G, A);
				end use:
				G, C
				end proc:
			\end{verbatim}
			\caption{Calculating the pseduo-inverse, $A^{-}$, of a square matrix, $A$, as well as $AA^{-}$ in max-plus} \label{tab4}
		\end{table}
	\end{center}
	
\end{document}